\newcommand{\arxiv}[1]{\href{http://arxiv.org/abs/#1}{\texttt{arXiv:#1}}}
\theoremstyle{plain}
\newtheorem{thm}{Theorem}[section]
\newtheorem{prop}[thm]{Proposition}
\newtheorem{cor}[thm]{Corollary}
\newtheorem{exmp}[thm]{Example}
\theoremstyle{definition}
\numberwithin{equation}{section}
\newcommand\cF{\mathcal{F}}
\newcommand\cG{\mathcal{G}}
\newcommand\cL{\mathcal{L}}
\newcommand\cO{\mathcal{O}}
\newcommand\cP{\mathcal{P}}
\newcommand\cX{\mathcal{X}}
\newcommand\FF{\mathbb{F}}
\newcommand\PP{\mathbb{P}}
\newcommand\ZZ{\mathbb{Z}}
\newcommand\PG{\mathsf{PG}}
\newcommand\GL{\mathsf{GL}}
\renewcommand\le{\leqslant}
\renewcommand\ge{\geqslant}
\newcommand\rank{\mathrm{rank}}
\def\fh{\mathfrak{h}}
\newcommand\Card{\mathrm{Card}}
\DeclareMathOperator\pr{\mathsf{pr}}
\title{The thickness of Schubert cells as incidence structures}
\author{John Bamberg\footnote{The University of Western Australia (john.bamberg@uwa.edu.au)},\quad
Arun Ram\footnote{The University of Melbourne (aram@unimelb.edu.au)},\quad
Jon Xu\footnote{The University of Melbourne (jyz.xu@unimelb.edu.au)}}
\date{\today}
\begin{document}
\maketitle 

\begin{abstract}
This paper explores the possible use of Schubert cells and Schubert varieties in finite geometry,
particularly in regard to the question of whether these objects might be a source of understanding
of ovoids or provide new examples.  The main result provides a characterization of those Schubert cells
for finite Chevalley groups which have the first property (\emph{thinness}) of ovoids.  More importantly, perhaps
this short paper can help to bridge the modern language barrier between finite geometry and representation
theory.  For this purpose, this paper includes very brief surveys 
of the powerful lattice theory point of view from finite geometry and the powerful method
of indexing points of flag varieties by Chevalley generators from representation theory.
\end{abstract}

\section{Introduction}\label{section:intro}

This paper is the result of an effort to create ``interdisciplinary'' communication and collaboration
between the finite geometry community and the representation theory communities in Australia.
The idea was that Chevalley groups could be a bridge between the two languages and the
problems of interest to the two communities.  
Among others, the books of Taylor \cite{Tay92} 
and Buekenhout and Cohen \cite{BC13}
are already existing, useful and important contributions to this dialogue.   Although we have not used the language of buildings
in this paper, the inspiring oeuvre of Tits \cite{Tits74, TitsA, TitsB} is the pinnacle of the powerful 
connections between these different points of view.  See, for example, \cite{PR08} for a brief survey of how these
points of view combine to give insight into the relationship between walks in buildings and representations
of complex algebraic groups and groups over local fields.

We chose to use the finite geometry question
of finding ovoids as a framework for our investigation.  The goal was to shape the language
of algebraic groups and Chevalley groups to provide tools for studying the question. 
The precedent in the work of Tits \cite{Tits61} and Steinberg 
\cite[Example (c) before Theorem 34]{St67} on the Suzuki-Tits ovoid
indicated that this was a fruitful research direction.

To describe further the results and methodology of this paper, let us review the definitions of
ovoids (in finite geometry) and Schubert cells (in representation theory).

\medskip
\emph{Ovoids.}
Let $V$ be a vector space and let $\cP(V)$ be the lattice of subspaces of $V$ with
inclusion $\subseteq$ as the partial order. A \emph{point} is a $1$-dimensional subspace of $V$,
a \emph{line} is a $2$-dimensional subspace and a \emph{hyperplane} is a codimension $1$ subspace of $V$.
Let $\cO$ be a set of points in $\cP(V)$.  A \emph{tangent line to $\cO$}
is a line in $\cP(V)$ that contains exactly one point of $\cO$.
Then \cite[\S1]{Tits62} defines, an \emph{ovoid of $\cP(V)$} as a set $\cO$ of points of $\cP(V)$ such that
\begin{enumerate}
\item[(O1)]  If $\ell$ is a line in $\cP(V)$ then $\ell$ contains 0, 1 or 2 points of $\cO$. \hfill{(thinness)}
\item[(O2)] If $p\in \cO$ then the union of the tangent lines to $\cO$ through 
$p$ is a hyperplane. \hfill{(maximality)}
\end{enumerate}
These two types of conditions, ``thinness'' and ``maximality'', characterize the definitions of
ovoids and ovals and hyperovals lying inside projective spaces, projective planes, polar spaces and
generalized quadrangles that can be found in the finite geometry literature (see, for example, \cite[\S1]{Br00}
and \cite[\S2.1 and \S4.2 and \S4.4]{BW11}).

\medskip
\emph{Schubert cells.}
Let $G(\FF)$ be a Chevalley group over $\FF$ and let $B$ be a Borel subgroup.  The quotient
$G(\FF)/B$ is the (generalized) flag variety. In the case that $G(\FF)= \GL_n(\FF)$,
$G(\FF)/B$ is the set of maximal chains $0\subseteq V_1\subseteq \cdots\subseteq V_{n-1}\subseteq V$
in $\cP(V)$, where $V$ is an $\FF$-vector space of dimension $n$.  The flag varieties are studied with the use of the Bruhat decomposition,
\[
G(\FF) = \bigsqcup_{w\in W} BwB,
\]
and the \textbf{Schubert cells} are 
\[
X_w = BwB
\]
viewed as subsets of the set of cosets $G(\FF)/B$.  In the case of $\GL_n(\FF)/B$ the $X_w$
are collections of maximal chains in $\cP(V)$ and thus, when $\FF=\FF_q$ is a finite field, the
$X_w$ are natural objects in finite geometry.  From the point of view of representation theory,
the closures of the Schubert cells are the Schubert varieties of the projective
variety $G(\overline{\FF})/B$ and this makes them tools in the framework of geometric representation
theory.

In pursuit of the question of what causes the ``thinness'' that distinguishes ovoids we prove 
the following result (Theorem \ref{maintheorem}), which is a computation of 
the ``thickness'' of the incidence structures that come from Schubert cells.

\begin{thm}[Main Theorem]\label{maintheorem} 
Let $G(\FF)$ be a Chevalley group with Weyl group $W$. 
Let $P_i$ and $P_j$ be standard maximal parabolic subgroups of $G(\FF)$ and let $w\in W$.
Let $(X_w)_{ij}$ be the incidence structure associated to the Schubert cell $X_w$ and let $gP_j$
be a line in $(X_w)_{ij}$. Then the number of points in $(X_w)_{ij}$ incident to $gP_j$ is 
\[
q^{\ell(z)},
\]
where $w = uzv$ with $u\in W^j$, $zv \in W_j$, $z\in (W_j)^{i,j}$ and $v \in W_{i,j}$.
\end{thm}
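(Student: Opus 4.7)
The plan is to reduce the count to a computation inside a single Schubert cell by exploiting $B$-equivariance and then to apply the parabolic Bruhat decomposition of $P_j$. Since $B$ acts on $X_w$, $G/P_i$, and $G/P_j$ by left multiplication in a compatible way, the number of points of $(X_w)_{ij}$ incident to a line depends only on the $B$-orbit of the line. Every line of $(X_w)_{ij}$ lies in $\pr_j(X_w) = BuP_j/P_j$, which is itself a single $B$-orbit, so it suffices to count points incident to the particular line $uP_j$, that is, to compute $|\pr_i(X_w \cap uP_j/B)|$.

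To identify the fiber $X_w \cap uP_j/B$, I would decompose $uP_j/B = \bigsqcup_{y \in W_j} (uByB)/B$ from the Bruhat decomposition $P_j = \bigsqcup_{y \in W_j} ByB$. For $u \in W^j$ and $y \in W_j$ one has $\ell(uy) = \ell(u)+\ell(y)$, so $BuB \cdot ByB = BuyB$, and hence $(uByB)/B$ lies inside the Schubert cell $X_{uy} = BuyB/B$. Because the $X_{uy}$ partition $G/B$, the intersection $X_w \cap (uByB)/B$ is empty unless $uy = w$, i.e.\ unless $y = zv$, in which case the entire piece $(uB(zv)B)/B$ is contained in $X_w$ and is the fiber.

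Next I would push forward under $\pr_i$: the image is $uB(zv)P_i/P_i$. Because $v \in W_{i,j} \subseteq W_i$, a representative of $v$ lies in $P_i$, so $vP_i = P_i$ and the image collapses to $uBzP_i/P_i$. Left multiplication by $u$ is a bijection of $G/P_i$, so $|uBzP_i/P_i| = |BzP_i/P_i|$, and the right-hand side is the cardinality of the Schubert cell of $G/P_i$ indexed by the minimal-length representative of $zW_i$.

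It remains to verify that $z$ itself is this minimal-length representative, i.e.\ $z \in W^i$, giving $|BzP_i/P_i| = q^{\ell(z)}$. The hypothesis $z \in (W_j)^{i,j}$ supplies $\ell(zs) > \ell(z)$ for every simple reflection $s \neq s_i, s_j$, while $z \in W_j$ forces $s_j$ to not be a right descent of $z$, so $\ell(zs_j) > \ell(z)$ as well. Together these yield $\ell(zs) > \ell(z)$ for every simple reflection $s \neq s_i$, which is exactly the defining condition for $z \in W^i$. The main subtlety is this last step; the preceding reductions are routine applications of length-additive Bruhat products, but the two-stage parabolic refinement $w = uzv$ is essential both for identifying the fiber and for simplifying its projection.
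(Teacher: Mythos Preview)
Your argument is correct and arrives at the same conclusion by essentially the same underlying facts (the Bruhat decomposition of $P_j$, length additivity for $u\in W^j$, and the key observation that $z\in W^i$), but the packaging is genuinely different from the paper's. The paper proves a companion result (Proposition~\ref{pullpushdecomp}) that parametrizes $p_i(p_j^{-1}(gP_j))$ for an \emph{arbitrary} line $gP_j$ by explicit Chevalley-generator coordinates, showing directly that each element has a unique expression $gx_{k_1}(d_1)n_{k_1}^{-1}\cdots x_{k_r}(d_r)n_{k_r}^{-1}P_i$; the main theorem is then obtained by intersecting with $X_w$ and picking out the single $y=zv$. You instead use $B$-equivariance at the outset to reduce to the single line $uP_j$, identify the fiber as one Bruhat piece $(uB(zv)B)/B$, and read off the count from the Schubert cell $BzP_i/P_i$ in $G/P_i$. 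Your route is shorter and avoids explicit coordinates; the paper's route yields, as a by-product, an explicit affine coordinatization of $p_i(p_j^{-1}(gP_j))$ valid for every $g$, which is informative in its own right (as the example following the proof illustrates). Your verification that $z\in W^i$ via right descents is equivalent to the paper's inversion-set argument $R(z)\cap R_i^+\subseteq R(z)\cap R_j^+\cap R_i^+ = R(z)\cap R_{\{i,j\}}^+=\varnothing$.
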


The objects in Theorem \ref{maintheorem} will be defined in forthcoming sections, and in particular,
the incidence structure $(X_w)_{ij}$ will be introduced in Section \ref{section:schubert}.
(It suffices to say here that its `points' are certain left cosets $gP_i$, its `lines' 
are certain left cosets of $hP_j$, and a point and line are incident if the ratio of their canonical 
coset representatives lies in the Borel subgroup of $G(\FF)$).
As an application of this theorem we determine the Schubert cell incidence structures coming from
finite Chevalley groups which have the thinness property; see Corollary \ref{cor:thinschucells}.  

In this paper we first review the background finite geometry of incidence structures and
projective geometries and the notation and framework for working with Chevalley groups and
generalized flag varieties (c.f., Sections \ref{section:lattices} and \ref{section:flagvarieties}). 
In Section \ref{section:schubert}, we define an incidence structure for each 
Schubert cell and pair of maximal parabolic subgroups of the Chevalley group.  This provides a way of 
analyzing the Schubert cell from the viewpoint of finite projective geometry. The main theorem
(Theorem \ref{maintheorem}) is a consquence of Proposition \ref{pullpushdecomp}.


\section{Lattices and incidence structures}\label{section:lattices}

In this section we review the equivalence between subspace lattices of a vector space,
projective lattices and projective incidence structures.  An inspiring modern textbook is \cite{Shu11}.
A classic reference to lattice theory is \cite{Birk48}.  The definition of a modular
lattice is given in \cite[Ch.\ V \S1]{Birk48}.  The equivalence between
projective incidence structures, complemented modular lattices and the subspace
lattice of a vector space over a division ring, which is stated as 
Theorem \ref{subspacesASlattice} below, is proved (even in the infinite dimensional
case) in \cite[Ch.\ VIII, Theorem 15]{Birk48}.
A classic reference to finite geometries is \cite{Dem68}, and the definition of an incidence
structure is given in \cite[\S1.1]{Dem68}. The definition of a projective
incidence structure (often called a projective geometry) is found in 
\cite[Ch.\ VIII \S3]{Birk48}, \cite[\S 3.3]{Cam00} and \cite[p.\ 16]{Tay92}.

\subsection{The subspace lattice $\cP(V)$ of a vector space $V$}

Let $\FF$ be a field or division ring
and let $V$ be a finite dimensional vector space over $\FF$.
The \textbf{subspace lattice} $\cP(V)$ of $V$ is
the set of subspaces of $V$ with partial order given by subspace inclusion.
More generally, one could consider a ring $R$ and a (left) $R$-module $M$ and 
the lattice of (left) $R$-submodules of $M$. At this level of generality, the situation is substantially more involved
and complicated than that of a subspace lattice of a vector space (see \cite{Vel95}).
In the finite geometry literature, a (Desarguesian) \textbf{projective space} is
$\PG(n,q)=\cP(\FF_q^{n+1})$,
where $\FF_q$ is the finite field with $q$ elements.  In the algebraic geometry literature (c.f., \cite[p. 8]{Har77}),
\textbf{projective space} is the quotient 
\[
\PP^n 
= \frac{\FF^{n+1} - \{ (0,\ldots, 0)\}}{\langle (a_0, \ldots, a_n) = (ca_0, \ldots, ca_n)\mid c\in \FF^\times\rangle}.
\]
These terminologies are conflicting and should, therefore, be used with care in the context of this article.

\subsection{Lattices}

A \textbf{lattice} is a partially ordered set $\cP$ that is closed under the operations of meet and join defined by
 $x\vee y=\sup\{x,y\}$ and $x\wedge y=\inf\{x,y\}$, for all $x,y\in cP$.
A \textbf{modular lattice} is a lattice $\cL$ such that for all $x,y,z\in \cL$ such that $x\le z$, we have
\[
x\vee (y\wedge z) = (x\vee y)\wedge z.
\]
Let $\cL$ be a finite lattice with a unique minimal element $0$ and a unique maximal element $1$.
\begin{enumerate}
\item[$\bullet$] An \emph{atom} is $a\in \cL$ such that there does \emph{not} exist $a'\in \cL$ with
$0<a'<a$.
\item[$\bullet$] An \emph{atomic lattice} is a lattice $\cL$ such that every element is a join of atoms.
\item[$\bullet$] A \emph{maximal chain} is a maximal length sequence $0<a_1<a_2<\cdots<a_\ell<1$
in $\cL$.
\item[$\bullet$] A lattice $\cL$ is \emph{ranked} if all maximal chains in $\cL$ have the same length.
\end{enumerate}
Let $\cL$ be a ranked lattice and let $a\in \cL$.  The \textbf{rank} of $a$, written $\rank(a)$, is the integer $i$ for which there exists a maximal
chain 
\[
0<a_1<a_2<\cdots<a_\ell<1
\]
with $a_i=a$.
A \textbf{projective lattice} is an atomic ranked modular lattice such that for all $x,y\in \cL$, we have the \emph{Grassmann identity}:
\[
\rank(x\vee y)+\rank(x\wedge y) = \rank(x)+\rank(y).
\]
Two lattices $\cL$ and $\cL'$ are \emph{isomorphic} if there is an order-preserving bijection from $\cL$ to $\cL'$.
The following theorem provides an equivalence between projective lattices and subspace lattices
of a vector space over a division ring.

\begin{thm}[see {\cite[Chapters V and VI]{HP47}}]\label{subspacesASlattice} \leavevmode
\begin{enumerate}[(a)] 
\item Let $V$ be a finite dimensional vector
space over a division ring.  Then $\cP(V)$ is a projective lattice.
\item If $\cL$ is a projective lattice then there exists a division ring $\FF$ 
and $n\in \ZZ_{>0}$ such that $\cL\cong \cP(\FF^n)$.
\end{enumerate}
\end{thm}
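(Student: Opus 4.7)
The plan is to treat (a) as a straightforward verification and to reduce (b) to a coordinatization argument in the style of the fundamental theorem of projective geometry.

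For part (a), I would take meet and join in $\cP(V)$ to be intersection $U\cap W$ and sum $U+W$, and check the five requirements in turn: that $(\cP(V),\subseteq)$ is a lattice (immediate), that it has unique least element $0$ and greatest element $V$, that atoms are precisely the $1$-dimensional subspaces and every $U\in\cP(V)$ is the sum of the $1$-dimensional subspaces it contains (so $\cP(V)$ is atomic), and that $\cP(V)$ is ranked with $\rank(U)=\dim U$, so that every maximal chain has length $\dim V$. Modularity $U+(W\cap Z)=(U+W)\cap Z$ for $U\subseteq Z$ is a short diagram chase using only the definitions, and the Grassmann identity is exactly the familiar dimension formula $\dim(U+W)+\dim(U\cap W)=\dim U+\dim W$. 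None of these steps is hard; they are the reason the axioms of a projective lattice were chosen as they were.

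For part (b), the substantive direction, the plan is to recover a projective incidence structure from $\cL$ and then coordinatize. First I would declare \emph{points} to be the atoms of $\cL$ and \emph{lines} to be the rank-$2$ elements, with incidence meaning $p\le\ell$. Using the Grassmann identity one shows: any two distinct atoms $p,q$ satisfy $\rank(p\vee q)=2$, so they lie on a unique line; any line contains at least three atoms (this uses atomicity together with the rank formula applied inside a rank-$3$ element); and the Veblen–Young axiom holds, because if $p,q,r,s$ are atoms with $p\vee q$ and $r\vee s$ meeting in an atom, then $\rank(p\vee q\vee r\vee s)=3$ in $\cL$, and inside this rank-$3$ interval the Grassmann identity forces $p\vee r$ and $q\vee s$ to meet in an atom as well. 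Together with $\rank(\cL)=n\ge 2$ these checks show the atoms and lines of $\cL$ form a (possibly degenerate) projective geometry in the Veblen–Young sense.

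Next I would coordinatize. If $n=\rank(\cL)\le 2$ there is nothing to do beyond observing that a line with $k+1$ atoms is $\cP(\FF_q^2)$ for a suitable $q$, or that a rank-$1$ lattice is $\cP(\FF^1)$. If $n\ge 3$, then any rank-$3$ interval $[0,z]$ of $\cL$ is a projective plane in which Desargues' theorem holds, because one can realize both triangles inside an ambient rank-$4$ element and run the usual argument using the Grassmann identity on intersections of planes in a $4$-space; this is where the modular, ranked structure really does the work. Once Desargues holds in every plane, I would invoke the classical coordinatization theorem (Hilbert, as developed in \cite{HP47}): the set of atoms on a fixed line, equipped with the ternary operation coming from projectivities, forms a division ring $\FF$, and choosing a frame of $n+1$ atoms in general position produces an order-preserving bijection from $\cL$ onto $\cP(\FF^n)$ that sends rank to dimension. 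The main obstacle is this last step, i.e.\ verifying Desargues and then packaging the projectivity data into the division ring axioms; this is where essentially all of the content of \cite[Chapters V and VI]{HP47} is used, and I would quote it rather than reconstruct it.
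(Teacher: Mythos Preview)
The paper does not supply its own proof here; the theorem is simply quoted with a citation to \cite[Chapters V and VI]{HP47} (and, in the text preceding it, to \cite[Ch.~VIII, Theorem 15]{Birk48}). So there is no in-paper argument against which to compare your attempt.

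On its own merits, your sketch of (a) is fine, and your plan for (b) in rank $\ge 4$ is the standard one: pass from $\cL$ to the incidence structure of atoms and rank-$2$ elements, verify the Veblen--Young axioms via the Grassmann identity, prove Desargues in every rank-$3$ interval by embedding it in an ambient rank-$4$ interval, and then invoke the classical coordinatization. The genuine gap is in the small-rank cases you wave away. When $\rank(\cL)=3$ your Desargues step needs an ambient rank-$4$ element that simply does not exist, and indeed the subspace lattice of any non-Desarguesian projective plane satisfies every clause of the paper's definition of a projective lattice yet is not isomorphic to any $\cP(\FF^3)$. When $\rank(\cL)=2$ your claim that ``a line with $k+1$ atoms is $\cP(\FF_q^2)$ for a suitable $q$'' is false as stated (take $k+1=7$; by Wedderburn there is no division ring of order $6$), and in any event a bare line carries no multiplicative data from which a division ring could be extracted. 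These are not repairable within your outline: they reflect the fact that the statement, read literally with the paper's definitions, requires the usual dimension or Desargues hypothesis that the cited sources make explicit. You should flag that hypothesis rather than try to argue around it.
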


\subsection{Incidence structures}

An \textbf{incidence structure} is a triple $(P,L,I)$ where $P$ and $L$ are sets and $I \subseteq P\times L$.
Let $\pr_1\colon P\times L \to P$ and $\pr_2\colon P\times L\to L$ be the projections onto the first and 
second factors. We have the following interface between geometric language and 
its algebraic formalism:
\begin{itemize}
\item A point $p\in P$ is \emph{contained in a line} $\ell\in L$ if $(p,\ell)\in I$. 
\item A subset
$S\subseteq P$ is \emph{collinear} if there exists $\ell\in L$ such each element $p$ of $S$ is contained in $\ell$. 
\end{itemize}
Often it is convenient to identify $\ell\in L$ with the set of points $\pr_1(\pr_2^{-1}(\ell))$; the points \emph{contained} in the line $\ell$.
A \textbf{projective incidence structure} is an incidence structure $I\subseteq P\times L$ such that
\begin{enumerate}
\item[(a)] If $p_1, p_2\in P$ and $p_1\ne p_2$ then there exists a unique line
$\ell(p_1,p_2)\in L$ containing $p_1$ and $p_2$ (any two points lie on a unique line);
\item[(b)] (Veblen-Young axiom) If $p_1, p_2, p_3\in P$ are not collinear and $\ell$ is a line intersecting
$\ell(p_1,p_3)$ and $\ell(p_2,p_3)$ then $\ell$ also intersects $\ell(p_1,p_2)$;

\begin{center}
\definecolor{uuuuuu}{rgb}{0.26666666666666666,0.26666666666666666,0.26666666666666666}
\definecolor{qqqqff}{rgb}{0.,0.,1.}
\definecolor{cqcqcq}{rgb}{0.7529411764705882,0.7529411764705882,0.7529411764705882}
\begin{tikzpicture}[line cap=round,line join=round, x=1.0cm,y=1.0cm,scale=0.6]
\clip(-6,-1.48) rectangle (12.88,5.86);
\draw [domain=-4.16:12.88] plot(\x,{(-0.-0.*\x)/6.});
\draw [domain=-4.16:12.88] plot(\x,{(--8.--4.*\x)/4.});
\draw [domain=-4.16:12.88] plot(\x,{(--16.-4.*\x)/2.});
\draw [domain=-4.16:12.88] plot(\x,{(--6.680660916834094-0.6273965635344791*\x)/2.466961615498298});
\draw [fill=qqqqff] (-2.,0.) circle (2.5pt);
\draw[color=qqqqff] (-1.6,-0.4) node {$p_1$};
\draw [fill=qqqqff] (4.,0.) circle (2.5pt);
\draw[color=qqqqff] (3.65,-0.4) node {$p_2$};
\draw [fill=qqqqff] (2.,4.) circle (2.5pt);
\draw[color=qqqqff] (2.6,3.86) node {$p_3$};
\draw[color=black] (-4,0.6) node {$\ell(p_1,p_2)$};
\draw[color=black] (4.6,5.18) node {$\ell(p_1,p_3)$};
\draw[color=black] (0.2,5.2) node {$\ell(p_2,p_3)$};
\draw [fill=black] (0.5644911108459612,2.564491110845961) circle (2.5pt);
\draw [fill=black] (3.031452726344259,1.937094547311482) circle (2.5pt);
\draw[color=black] (-3.58,4) node {$\ell$};
\draw [fill=uuuuuu] (10.648226823555039,0.) circle (2.5pt);
\end{tikzpicture}
\end{center}

\item[(c)] (thickness condition) Any line contains at least 3 points;
\item[(d)] (dimension $\ge 2$ condition) There exist 3 noncollinear points in $P$;
\item[(e)] (finite dimensionality condition) Any increasing sequence of subspaces has finite length.
\end{enumerate}

Assume that $I\subseteq P\times L$ is an incidence structure such that 
any two points lie on a unique line.
A \textbf{subspace} is a set $S\subseteq P$ such that $S$ contains any line connecting two of its points, i.e.,
if $p_1, p_2\in S$ then $\pr_1(\pr_2^{-1}(\ell(p_1,p_2)))\subseteq S$.
The \textbf{subspace lattice} $\cP(I)$ of $I\subseteq P\times L$ is 
the set of subspaces $S\subseteq P$ partially ordered by inclusion.

The following  ``Veblen-Young Theorem'' provides an equivalence between projective incidence structures
and projective lattices. 

\begin{thm} (see \cite[Chapters V and VI]{HP47})
\item[(a)]  If $\cG$ is a projective incidence structure then $\cP(\cG)$ is a projective lattice.
\item[(b)]  Let $\cP$ be a ranked lattice.  Let 
\begin{align*}
\cP_1 &= \{ p\in \cP \mid \rank(p)=1\}, \\
\cP_2 &= \{ \ell\in \cP \mid \rank(\ell)=2\},
\end{align*}
and let $I$ be the incidence relation inherited from $\cP$; so $(p,\ell)\in \cP_1\times \cP_2$ lies in $I$
if and only if $p\le \ell$ in $\cP(\cG)$.
If $\cP$ is a projective lattice then $(\cP_1,\cP_2,I)$ is a projective incidence structure.
\end{thm}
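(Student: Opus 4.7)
The plan is to translate, in each direction, between the geometric incidence axioms on $\cG = (P, L, I)$ and the order-theoretic axioms on the subspace lattice, exactly as in Theorem \ref{subspacesASlattice} but now for an abstract projective incidence structure rather than the specific lattice $\cP(V)$.

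For part (a), I would first verify that $\cP(\cG)$ is a lattice by defining $S_1 \wedge S_2 = S_1 \cap S_2$ (the intersection of two subspaces is again a subspace, by unwinding the definition) and taking $S_1 \vee S_2$ to be the smallest subspace containing $S_1 \cup S_2$; this exists because $P$ itself is a subspace and subspaces are closed under intersection. Atomicity is immediate: the atoms are exactly the singletons $\{p\}$ for $p \in P$, and every subspace is the join of its points. The rank function comes from axiom (e) via an exchange-type argument that uses the unique line condition to show that all maximal chains in a bounded interval have the same length. The crux is then modularity: given $x \le z$ and arbitrary $y$, the inclusion $x \vee (y \wedge z) \subseteq (x \vee y) \wedge z$ is automatic, while the reverse is proved by induction on $\rank(y)$; for an atom $p \le (x \vee y) \wedge z$, one writes $p$ on a line through a point of $x$ and a point of $y$, and then applies the Veblen-Young axiom (b) to extract the required point of $y \wedge z$ on a suitable transversal. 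Once modularity is in hand, the Grassmann identity follows by the standard argument for an atomic, ranked, modular lattice.

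For part (b), given two distinct atoms $p_1, p_2 \in \cP_1$, the Grassmann identity forces $\rank(p_1 \vee p_2) = 2$ (since $p_1 \wedge p_2 = 0$ because $p_1 \ne p_2$ are atoms), so $p_1 \vee p_2 \in \cP_2$ is the unique line containing both, giving axiom (a) of a projective incidence structure. The Veblen-Young axiom (b) translates from modularity: if $p_1, p_2, p_3 \in \cP_1$ are non-collinear and $\ell \in \cP_2$ meets $\ell(p_1, p_3)$ and $\ell(p_2, p_3)$, then the modular identity applied to $\ell$ and $p_1 \vee p_2$ inside $p_1 \vee p_2 \vee p_3$ produces the required intersection on $\ell(p_1, p_2)$. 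Axioms (d) and (e) follow from the existence of rank-$3$ elements and the ranked/finite chain condition on $\cP$; axiom (c), the thickness condition, is the delicate one, and is where the classical Veblen-Young theorem as discussed in \cite{HP47} requires either an auxiliary non-degeneracy hypothesis on $\cP$ or finer lattice-theoretic input.

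The main obstacle is deriving modularity from Veblen-Young in part (a): the axiom is a very concrete statement about three lines in a triangle configuration, while modularity is an abstract identity on arbitrary triples $(x, y, z)$ of lattice elements, and the inductive reduction from the general case to the base case handled directly by Veblen-Young is the nontrivial step of the proof. The corresponding delicate point in part (b) is the thickness axiom, which does not appear to be forced by the abstract projective lattice axioms as stated here and signals the need for additional structure in the reverse direction.
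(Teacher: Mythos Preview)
The paper does not supply its own proof of this theorem; it is stated as a classical result with a citation to \cite[Chapters V and VI]{HP47} (and the parallel Theorem~\ref{subspacesASlattice} is likewise only cited). So there is no in-paper argument to compare against: the authors are simply recording the Veblen--Young theorem as background.

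Your outline is the standard one and is sound in broad strokes. You have correctly identified the two genuinely nontrivial points: in part~(a), the inductive reduction of the modular law to the triangle configuration where the Veblen--Young axiom applies, and in part~(b), the thickness axiom~(c). On the latter you are right to be suspicious: with the projective-lattice axioms exactly as the paper states them (atomic, ranked, modular, Grassmann identity), nothing forces every rank-$2$ element to dominate at least three atoms, so strictly speaking one needs either an additional hypothesis or to fold thickness into the definition. This is a known wrinkle in the classical statement rather than a defect in your argument. If you want to turn the sketch into a complete proof, the place requiring the most care is your claim that ``all maximal chains in a bounded interval have the same length'' via an exchange argument from the unique-line axiom; make sure you actually invoke the Veblen--Young axiom (or the equivalent Steinitz--Mac\,Lane exchange property) there, since the unique-line axiom alone is not enough.
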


\section{Flag varieties and Chevalley groups}\label{section:flagvarieties}

In this section we review the formalism and establish our notation for working with (generalized) flag varieties.
A classic reference to Chevalley groups and flag varieties is \cite{St67}.  Good supportive references
are \cite[\S2.1]{Sesh14} and \cite[\S23.3]{FH91}.  
The first step in our review is to identify the flag variety
as the set of maximal chains in the subspace lattice $\cP(V)$.

\subsection{Flag varieties and $\GL_n(\FF)$}\label{subsecflagvarieties}

Let $\FF$ be a field (or division ring) and let $V$ be a finite dimensional $\FF$-vector space.
The \textbf{flag variety} $\cF(V)$ is the set of maximal chains in $\cP(V)$. 
By choosing a basis $\{e_1, \ldots, e_n\}$ in $V$, 
the \textbf{standard flag}
$$F_0 = (0\subseteq \hbox{span}\{e_1\}\subseteq \hbox{span}\{e_1, e_2\}\subseteq
\cdots \subseteq \hbox{span}\{e_1, \ldots, e_n\} = V)$$
has stabilizer the \textbf{Borel subgroup} $B$ consisting of all upper triangle matrices of $\GL_n(\FF)$.
We then obtain a bijection, and an equivalence of group actions (of $\GL_n(\FF)$ on $\GL_n(\FF)/B$ and on $\cF(V)$):
%
\[
\begin{matrix}
\GL_n(\FF)/B &\longrightarrow &\cF(V) \\
gB &\longmapsto &gF_0
\end{matrix}.
\]

A \textbf{parabolic subgroup} of $\GL_n(\FF)$ is the stabilizer of a subspace $W\subseteq V$,
and the \textbf{standard maximal parabolic subgroups} are
\[
P_i = \mathrm{Stab}(\hbox{span}\{e_1, e_2, \ldots, e_i\}),
\]
for $i\in \{1,2, \ldots, n\}$.

Let $E_{ij}$ denote the $n\times n$ matrix with $1$ in the $(i,j)$ entry and $0$ in all other entries.
Let $\fh^*=\ZZ\varepsilon_1+\cdots + \ZZ\varepsilon_n$ be the 
free $\ZZ$-module with basis $\varepsilon_1, \ldots, \varepsilon_n$ and let 
\[
R = \{ \varepsilon_i-\varepsilon_j\mid i,j\in\{1, \ldots, n\}\text{ with }i\ne j\}.
\]
The group $\GL_n(\FF)$ is generated by the \textbf{elementary matrices}
$$x_{\varepsilon_i-\varepsilon_j}(c) = I+cE_{ij},
\quad
s_{\varepsilon_i-\varepsilon_j} = I+E_{ij}+E_{ji}-E_{ii}-E_{jj},
\quad
h_{\lambda^\vee}(d) = \mathrm{diag}(d^{\lambda_1}, \ldots, d^{\lambda_n}),
$$
for $\varepsilon_i-\varepsilon_j\in R$ and $c\in \FF$,
and for $\lambda^\vee = (\lambda_1, \ldots, \lambda_n)\in \ZZ^n$ and $d\in \FF^\times$.
The \textbf{root subgroups} are
\[
\cX_{\varepsilon_i-\varepsilon_j} = \{ x_{\varepsilon_i-\varepsilon_j}(c)\mid c\in \FF\}
\]
and the set of \textbf{positive roots} is
\[
R^+ = \{ \alpha\in R\ | \ \cX_\alpha\subseteq B\}.
\]
The \textbf{simple roots} $\alpha_1, \ldots, \alpha_{n-1}$ are given by
\[
\alpha_i = \varepsilon_i - \varepsilon_{i+1},
\]
and setting $s_i = s_{\alpha_i}$, the \textbf{Weyl group} is
\[
W = \langle s_1, \ldots, s_n\mid s_i^2=1, \ s_is_{i+1}s_i=s_{i+1}s_is_{i+1}\rangle
\]
(which is the symmetric group $S_n$ here).  
The \textbf{Bruhat decomposition} (see \cite[Example (a) after Theorem $4'$]{St67}
or \cite[Theorem 23.59]{FH91} or \cite[\S4.2.4]{Sesh14}) 
is
$$\GL_n(\FF) = \bigsqcup_{w\in W} BwB.$$

\subsection{Chevalley groups and generalized flag varieties $G(\FF)/B$}\label{ChevalleyGroups}

In the same way that $\GL_n(\FF)$ is generated by elementary matrices, a Chevalley group $G(\FF)$
is generated by Chevalley generators $x_\alpha(c)$, $h_{\lambda^\vee}(d)$, 
for $\alpha,\lambda \in R$, $c\in \FF$, $d\in \FF^\times$, 
which satisfy specified relations \cite[Relations (R), Chapter 3, page 23]{St67}.  
The set $R$ of \textbf{roots} is a labeling
set for the \textbf{root subgroups}
\begin{equation}
\cX_\alpha = \{ x_\alpha(c)\mid c\in \FF\}
\quad\hbox{for $\alpha\in R$.}
\end{equation}

The set $R$ is endowed with a chosen decomposition into positive and negative roots
\begin{equation*}
R = R^+\sqcup (-R^+), \qquad\hbox{where\quad $-R^+ = \{ -\alpha\mid \alpha\in R^+\}$}.
\end{equation*}
Defining
\[
U = \langle \cX_\alpha\mid \alpha\in R^+\rangle,
\quad T= \langle h_{\lambda^\vee}(d)\mid \lambda^\vee\in \fh_\ZZ,\ d\in \FF^\times\},
\quad\hbox{and}\quad B=UT,
\]
we call $G(\FF)/B$ the \textbf{generalized flag variety}.
The \textbf{simple roots} $\alpha_1, \ldots, \alpha_n$ provide a minimal set of root subgroup
generators for 
\[
U = \langle \cX_{\alpha_1}, \ldots, \cX_{\alpha_n}\rangle.
\]
The \textbf{standard maximal parabolic subgroups} are
\[
P_i = \langle \cX_{-\alpha_1}, \ldots, \cX_{-\alpha_{i-1}},\cX_{-\alpha_{i+1}},\ldots, \cX_{-\alpha_n}, B\rangle,
\qquad
\hbox{for $i\in \{1, \ldots, n\}$.}
\]

\subsubsection{Labeling the points of the flag variety}

Letting $N = \langle n_\alpha\mid \alpha\in R\rangle$, the \emph{Weyl group} is $W = N/T$.
For $i\in \{1, \ldots, n\}$, define
\[
x_i(c) = x_{\alpha_i}(c), \qquad
n_i = x_{\alpha_i}(1)x_{-\alpha_i}(-1)x_{\alpha_i}(1),
\qquad\hbox{and}\quad s_i = n_i T.
\]
The Weyl group $W$ has a Coxeter presentation with generators
$s_1, \ldots, s_n$ and relations
$s_i^2 = 1$ and $(s_is_j)^{m_{ij}}=1$,
where $m_{ij}$ is the order of $s_is_j$ in $W$.
A \textbf{reduced decomposition} for an element $w\in W$ is an expression $w=s_{i_1}\cdots s_{i_\ell}$ with $\ell$ minimal.
The following provides an explicit indexing of the points of the flag variety.


\begin{prop}[{\cite[Theorem $4'$, Theorem 15 and Lemma 43(a)]{St67}
see also \cite[(7.3)]{PRS}}]\label{GBreps}
 For each $w \in W$ fix a reduced decomposition $w = s_{i_1}\cdots s_{i_\ell}$.  Then
$$G(\FF)/B = \bigsqcup_{w \in W} BwB
\qquad\hbox{with}\quad
BwB = \{ x_{i_1}(c_1)n_{i_1}^{-1}\cdots x_{i_\ell}(c_\ell)n_{i_\ell}^{-1}B\mid c_1, \ldots, c_{\ell} \in \FF\},$$
and 
$\{ x_{i_1}(c_1)n_{i_1}^{-1}\cdots x_{i_{\ell}}(c_{\ell})n_{i_{\ell}}^{-1}\mid c_1, \ldots, c_{\ell}\in \FF\}$ is a complete set
of representatives of the cosets of $B$ in $BwB$.
\end{prop}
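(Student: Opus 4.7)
The first assertion, the Bruhat decomposition $G(\FF)/B = \bigsqcup_{w \in W} BwB$, is cited directly from \cite[Theorem $4'$]{St67}; the substance of the proposition is the explicit parametrization of each cell $BwB/B$ by the product of Chevalley generators built from a fixed reduced decomposition. I would prove this by induction on $\ell = \ell(w)$.

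For the base case $\ell = 1$, one must show $Bs_iB/B = \{x_i(c)n_i^{-1}B \mid c \in \FF\}$ with the $B$-cosets distinct. Writing $U = \cX_{\alpha_i}\cdot U'_i$, where $U'_i = \langle \cX_\alpha \mid \alpha \in R^+\setminus\{\alpha_i\}\rangle$ is stable under conjugation by $s_i$ (since $s_i$ permutes $R^+\setminus\{\alpha_i\}$), one sees $U s_i B = \cX_{\alpha_i} s_i B$; and $s_iB = n_i^{-1}B$ because $n_i^{-2}\in T \subseteq B$. For the inductive step, write $w = w's_{i_\ell}$ with $w' = s_{i_1}\cdots s_{i_{\ell-1}}$ reduced of length $\ell-1$, and invoke the standard Bruhat multiplication rule $(Bw'B)(Bs_{i_\ell}B) = BwB$, which holds because the lengths add; combined with the inductive hypothesis for $w'$ this yields the claimed generating family for $BwB/B$.

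To prove that this family consists of \emph{distinct} cosets, I would apply the Chevalley conjugation relation $n_i^{-1}x_\alpha(c)n_i = x_{s_i(\alpha)}(\pm c)$ repeatedly to slide each $n_{i_k}^{-1}$ to the right, rewriting
\[
x_{i_1}(c_1)n_{i_1}^{-1}\cdots x_{i_\ell}(c_\ell)n_{i_\ell}^{-1} = x_{\beta_1}(\pm c_1)\cdots x_{\beta_\ell}(\pm c_\ell)\cdot \dot w,
\]
where $\beta_k = s_{i_1}\cdots s_{i_{k-1}}(\alpha_{i_k})$ and $\dot w = n_{i_1}^{-1}\cdots n_{i_\ell}^{-1}$ is a fixed lift of $w$ to $N$. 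Since $s_{i_1}\cdots s_{i_\ell}$ is reduced, the $\beta_k$ are the distinct positive roots forming the inversion set of $w^{-1}$; hence the leading product parametrizes bijectively the subgroup $U_w = \prod_k \cX_{\beta_k}\subseteq U$ as the $c_k$ vary over $\FF$, and the standard injection $u\mapsto u\dot w B$ embeds $U_w$ into $BwB/B$. Combined with the surjectivity from the inductive step, this gives the bijection $\FF^\ell \to BwB/B$.

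The main obstacle is the bookkeeping in this sliding step: one must verify that after moving the first $k-1$ factors $n_{i_j}^{-1}$ past the Chevalley generators to their right, the Weyl element conjugating $\alpha_{i_k}$ is exactly $s_{i_1}\cdots s_{i_{k-1}}$, and one must appeal to the standard combinatorial fact that the resulting $\beta_k$'s for a reduced expression are distinct positive roots. The sign factors produced by the Chevalley relations are harmless, as they can be absorbed into a re-parametrization of the $c_k$.
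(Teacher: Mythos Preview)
The paper does not give its own proof of this proposition: it is stated with citation to \cite[Theorem~$4'$, Theorem~15 and Lemma~43(a)]{St67} and \cite[(7.3)]{PRS}, and is used thereafter as a black box. So there is no ``paper's proof'' to compare against.

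That said, your sketch is essentially the standard argument one finds in the cited references. The induction on $\ell(w)$ using $(Bw'B)(Bs_{i_\ell}B)=BwB$ for length-additive products gives surjectivity, and the ``sliding'' rewrite
\[
x_{i_1}(c_1)n_{i_1}^{-1}\cdots x_{i_\ell}(c_\ell)n_{i_\ell}^{-1}
= x_{\beta_1}(\pm c_1)\cdots x_{\beta_\ell}(\pm c_\ell)\,\dot w,
\qquad \beta_k = s_{i_1}\cdots s_{i_{k-1}}(\alpha_{i_k}),
\]
is exactly the mechanism behind Steinberg's Lemma~43(a). One point you should make explicit for completeness: the injectivity of $u\mapsto u\dot wB$ on $U_w$ relies on the factorization $U = U_w\cdot(U\cap \dot wU\dot w^{-1})$ with $U_w\cap \dot wU\dot w^{-1}=\{1\}$ (Steinberg's Lemma~17 / Theorem~$4'$), not merely on the $\beta_k$ being distinct positive roots. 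With that caveat, your argument is correct and is the same route the cited sources take.
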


\section{Thickness in Schubert cells}\label{section:schubert}

Keeping the notation of Section \ref{ChevalleyGroups},
let $G(\FF)$ be a Chevalley group and let $P_i$ and $P_j$ be standard maximal parabolic subgroups of $G(\FF)$.  
Let $w\in W$. 
Define maps $p_i^w$ and $p_j^w$ as follows:
\[
\begin{matrix}
p^w_i\colon &BwB &\to &G/{P_i} \\
&gB &\mapsto &gP_i
\end{matrix}
\qquad\hbox{and}\qquad 
\begin{matrix}
p^w_j\colon &BwB &\to &G/P_j \\
&gB &\mapsto &gP_j\ .
\end{matrix}
\] 
Let $(X_w)_{ij}$ be the following incidence structure:
%
\begin{enumerate}
\item[(a)] a \emph{point} in $(X_w)_{ij}$ is an element $gP_i$ of the image of $p_i^w$,
\item[(b)] a \emph{line} in $(X_w)_{ij}$ is an element $hP_j$ of the image of $p_j^w$, and 
\item[(c)] a point $gP_i$ is \emph{incident} to a line $hP_j$ if there exists $kB\in BwB$ such that
$p_i^w(kB) = gP_i$ and $p_j^w(kB) = hP_j$.
\end{enumerate}
Alternatively, it is not difficult to see that the incidence relation above can be simplified by stipulating
that $gh^{-1}\in B$ instead.

Let
\[
R_i^+ = \{ \alpha\in R^+\mid \cX_{-\alpha}\in P_i\}, \quad R_j^+ = \{ \alpha\in R^+\mid \cX_{-\alpha}\in P_j\}, \quad
R_{\{i,j\}}^+ = R_i^+\cap R_j^+,
\]
and let
\[
W_i = \langle s_\alpha\mid \alpha\in R_i^+\rangle, W_j = \langle s_\alpha\mid \alpha\in R_j^+\rangle, W_{\{i,j\}} = W_i\cap W_j.
\]
For $z\in W$ the \emph{inversion set of $z$} is
\[
R(z) := \{\alpha\in R^+\mid \cX_{z\alpha}\not\in B\},
\]
and $\ell(z) := \Card(R(z))$ is the length of a reduced decomposition of $z$ (in this definition 
$\cX_{z\alpha} = z\cX_\alpha z^{-1}$).  
Let $W^j$ be the set of minimal length coset representatives of $W_j$ in $W$,
and let $W_j^{\{i,j\}}$ be the set of minimal length coset representatives of $W_i\cap W_j$ in $W_j$.
So
\begin{align*}
W^j 
&= \{ z\in W\mid R(z)\cap R_j^+ = \varnothing\}, \\
(W_j)^{\{i,j\}} 
&=\{ z\in W\mid R(z)\subseteq R_j^+\text{ and }R(z)\cap R_{\{i,j\}} = \varnothing\}.
\end{align*}

The following proposition is a slight generalization of Propositon \ref{GBreps}.

\begin{prop}\label{GPireps} 
For each $u\in W^j$ fix a reduced decomposition $u = s_{i_1}\cdots s_{i_k}$.  Then
\[
G/P_j = \bigsqcup_{u \in W^j} BuP_j
\qquad\hbox{with}\quad
BuP_j = \{ x_{i_1}(c_1)n_{i_1}^{-1}\cdots x_{i_k}(c_k)n_{i_k}^{-1}P_j\mid c_1, \ldots, c_k\in \FF\},
\]
and 
$\{ x_{i_1}(c_1)n_{i_1}^{-1}\cdots x_{i_k}(c_k)n_{i_k}^{-1}\mid c_1, \ldots, c_k\in \FF\}$ is a set
of representatives of the cosets of $P_j$ in $BuP_j$.
\end{prop}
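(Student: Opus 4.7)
My plan is to deduce Proposition~\ref{GPireps} from Proposition~\ref{GBreps} by pushing the full Bruhat parametrization of $G/B$ down along the natural projection $\pi\colon G/B \to G/P_j$, and to invoke the minimality property of $u\in W^j$ at precisely the point where injectivity has to be verified.

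First, I would establish the parabolic Bruhat decomposition $G/P_j = \bigsqcup_{u\in W^j} BuP_j$. Starting from the Bruhat decomposition $G(\FF) = \bigsqcup_{w\in W} BwB$ of Proposition~\ref{GBreps} and the analogous decomposition $P_j = \bigsqcup_{v\in W_j} BvB$, I would use the standard fact that each $w\in W$ factors uniquely as $w=uv$ with $u\in W^j$, $v\in W_j$, and that in this situation $\ell(uv)=\ell(u)+\ell(v)$. Combining this with the product-of-cells identity $BuB\cdot BvB = BuvB$ (valid whenever lengths add) yields $BuP_j = \bigsqcup_{v\in W_j} BuvB$, and regrouping the Bruhat cells of $G$ according to their $u$-component gives the desired disjoint union.

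Next, for a fixed $u\in W^j$ with reduced decomposition $u=s_{i_1}\cdots s_{i_k}$, I would let $\psi\colon \FF^k \to BuB/B$ be the bijection supplied by Proposition~\ref{GBreps} and consider the composition $\Phi=\pi\circ\psi\colon \FF^k \to BuP_j/P_j$,
\[
\Phi(c_1,\ldots,c_k) = x_{i_1}(c_1)n_{i_1}^{-1}\cdots x_{i_k}(c_k)n_{i_k}^{-1}P_j.
\]
Surjectivity of $\Phi$ onto $BuP_j/P_j$ is routine: any $g\in BuP_j$ may be written as $bn_up$ with $b\in B$ and $p\in P_j$, so $gP_j = bn_uP_j$, and $bn_u\in BuB$ has a unique canonical representative from Proposition~\ref{GBreps}; pushing that representative into $G/P_j$ exhibits $gP_j$ as a value of $\Phi$.

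The step I expect to be the main obstacle is the injectivity of $\pi$ restricted to $BuB/B$, which is exactly where the hypothesis $u\in W^j$ is used. My plan there is the following: suppose $g_1,g_2\in BuB$ satisfy $g_1P_j = g_2P_j$, so $g_1^{-1}g_2\in P_j$. Writing $g_1^{-1}g_2\in BvB$ for some $v\in W_j$ by the Bruhat decomposition of $P_j$, I would then use $g_2\in BuB\cdot BvB = BuvB$ (again because $\ell(uv)=\ell(u)+\ell(v)$ for $u\in W^j$ and $v\in W_j$) and intersect with $BuB$ to force $uv=u$, hence $v=1$ and $g_1^{-1}g_2\in B$. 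This gives $g_1B=g_2B$, so the injective parametrization of $BuB/B$ from Proposition~\ref{GBreps} descends to an injective parametrization of $BuP_j/P_j$, completing the proof. Everything outside this injectivity step is essentially bookkeeping that mirrors the template of Proposition~\ref{GBreps}.
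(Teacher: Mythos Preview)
Your proof is correct and follows essentially the same approach as the paper: both deduce the result from Proposition~\ref{GBreps} via the projection $G/B\to G/P_j$ together with the unique factorization $w=uy$, $u\in W^j$, $y\in W_j$, and the fact that the $y$-tail of a canonical representative lies in $P_j$. Your treatment is in fact more complete, since the paper only spells out the surjectivity step and leaves disjointness and uniqueness of representatives implicit, whereas you supply a clean injectivity argument via the product-of-cells identity $BuB\cdot BvB=BuvB$.
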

\begin{proof}
If $w\in W$ then there are unique $u\in W^j$ and $y\in W_j$ such that $w=uy$ (see \cite[Ch.\ 4 \S1 Exercise 3]{Bou}).
If $u=s_{i_1}\cdots s_{i_k}$ and $y=s_{i_{k+1}}\cdots s_{i_\ell}$ are reduced decomposition
then $w=s_{i_1}\cdots s_{i_k}s_{i_{k+1}}\cdots s_{i_\ell}$ is reduced.  If 
$gB = x_{i_1}(c_1)n_{i_1}^{-1}\cdots x_{i_\ell}(c_\ell)n_{i_\ell}^{-1}B \in BwB$
then $gP_i = x_{i_1}(c_1)n_{i_1}^{-1}\cdots x_{i_k}(c_k)n_{i_k}^{-1}P_j$ since
every factor of the product 
$x_{i_{k+1}}(c_{k+1})n_{i_{k+1}}^{-1}\cdots x_{i_\ell}(c_\ell)n_{i_\ell}^{-1}$ is an
element of $P_j$.
\end{proof}

%

Let $p_i\colon G/B\to G/P_i$ and $p_j\colon G/B\to G/P_j$ be the natural projection maps (e.g., 
$p_i(gB)=gP_i$ for all $g\in G$).
Each $y\in W_j$ has a unique expression $y=zv$ with $z\in (W_j)^{\{i,j\}}$ and
$v\in W_{\{i,j\}}$.  
For each $y\in W_j$, fix a reduced decomposition 
\begin{equation*}
y = s_{k_1}\cdots s_{k_r} s_{\ell_1}\cdots s_{\ell_t},
\quad\hbox{with $z=s_{k_1}\cdots s_{k_r}\in (W_j)^{\{i,j\}}$ and 
$v=s_{\ell_1}\cdots s_{\ell_t}(W_j)_{\{i,j\}}$.}
\end{equation*}
With
\begin{equation}
U_y = \{ x_{k_1}(d_1)n_{k_1}^{-1} \cdots x_{k_r}(d_r)n_{k_r}^{-1}
x_{\ell_1}(e_1)n_{\ell_1}^{-1}\cdots x_{\ell_t}(e_t)n_{\ell_t}^{-1}
\mid d_1, \ldots, d_r, e_1, \ldots, e_t\in \FF\},
\label{Uydefn}
\end{equation}
we have
\begin{equation}
P_j = \bigsqcup_{y\in W_j} U_yB
\qquad\hbox{and}\qquad
p_j^{-1}(gP_j) = \bigsqcup_{y\in W_j} gU_y B.
\label{fiberreps}
\end{equation}
With this notation in hand, we can now state the following proposition that determines the structure of each $p_i(p_j^{-1}(gP_j)$.

\begin{prop}\label{pullpushdecomp} Let $gP_j\in G/P_j$.
With notation as above, 
the map $\Phi$ from $p_i(p_j^{-1}(gP_j))$ to $\bigsqcup_{y\in W_j} \FF^{\ell(z)}$ defined by
\[
\Phi\left(gx_{k_1}(d_1)n_{k_1}^{-1}\cdots x_{k_r}(d_r)n_{k_r}^{-1}P_i\right) := (d_1, \ldots, d_r)
\]
is a bijection.
%
\end{prop}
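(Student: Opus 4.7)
The plan is to decompose the fiber $p_j^{-1}(gP_j)$ using \eqref{fiberreps} and push each piece forward to $G/P_i$. The two key observations are that the ``tail'' factors corresponding to $v\in W_{\{i,j\}}$ are absorbed by $P_i$, while the ``head'' factors indexed by $z\in(W_j)^{\{i,j\}}$ parametrize distinct $P_i$-cosets, so that Proposition \ref{GPireps} can be reapplied with $P_i$ in place of $P_j$.

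The first step is tail absorption: for $y=zv\in W_j$ with fixed reduced factorization $y=s_{k_1}\cdots s_{k_r}s_{\ell_1}\cdots s_{\ell_t}$, I would show that $x_{\ell_1}(e_1)n_{\ell_1}^{-1}\cdots x_{\ell_t}(e_t)n_{\ell_t}^{-1}\in P_i$ for every choice of parameters. Because $v\in W_{\{i,j\}}\subseteq W_i$, each index $\ell_a$ is different from $i$, so $x_{\ell_a}(e_a)\in B\subseteq P_i$ and $n_{\ell_a}=x_{\alpha_{\ell_a}}(1)x_{-\alpha_{\ell_a}}(-1)x_{\alpha_{\ell_a}}(1)$ lies in $\langle\cX_{\alpha_{\ell_a}},\cX_{-\alpha_{\ell_a}}\rangle\subseteq P_i$. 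Consequently the $P_i$-coset of any representative of $gU_y$ depends only on $z$ and on the head parameters $(d_1,\ldots,d_r)$, so $\Phi$ is well-defined on each piece and its value does not depend on the $v$-data.

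The second step is injectivity for fixed $z$. First I would verify $z\in W^i$: since $z\in W_j$ and each $s_k$ with $k\ne j$ maps $R^+\setminus R_j^+$ into itself (as the $\alpha_j$-coefficient is preserved), we have $R(z)\subseteq R_j^+$; combined with the assumption $R(z)\cap R_{\{i,j\}}^+=\varnothing$ coming from $z\in(W_j)^{\{i,j\}}$, this forces $R(z)\subseteq R_j^+\setminus R_i^+$, so in particular $R(z)\cap R_i^+=\varnothing$, i.e., $z\in W^i$. Proposition \ref{GPireps} applied to $(P_i,z)$ in place of $(P_j,u)$ then shows that $\{x_{k_1}(d_1)n_{k_1}^{-1}\cdots x_{k_r}(d_r)n_{k_r}^{-1}\mid d_a\in\FF\}$ is a complete set of distinct $P_i$-coset representatives for $BzP_i$; left-translation by $g$ preserves distinctness and yields injectivity of $\Phi$ on this piece.

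To conclude, for $z_1\ne z_2$ in $(W_j)^{\{i,j\}}\subseteq W^i$ the parabolic Bruhat decomposition $G=\bigsqcup_{z\in W^i}BzP_i$ gives $gBz_1P_i\cap gBz_2P_i=\varnothing$, and since the image of $gU_{z_i}$ is contained in $gBz_iP_i/P_i$, the pieces for different $z$'s are disjoint in $G/P_i$. The main subtlety I anticipate is bookkeeping rather than computation: although \eqref{fiberreps} indexes the fiber by all of $W_j$, projection to $G/P_i$ collapses the $v$-parameters, so $p_i(p_j^{-1}(gP_j))$ is really indexed by pairs $(z,(d_1,\ldots,d_{\ell(z)}))$ with $z\in(W_j)^{\{i,j\}}$; once this is properly identified, the proposition becomes a direct consequence of Proposition \ref{GPireps} together with the parabolic Bruhat decomposition.
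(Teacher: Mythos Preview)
Your proposal is correct and follows essentially the same route as the paper: decompose the fiber via \eqref{fiberreps}, absorb the $v$-tail into $P_i$ (since each $\ell_a\ne i$ gives $x_{\ell_a}(e_a),n_{\ell_a}\in P_i$), check that $z\in W^i$ from $R(z)\subseteq R_j^+$ together with $R(z)\cap R_{\{i,j\}}^+=\varnothing$, and then appeal to Proposition~\ref{GPireps} and uniqueness of minimal-length coset representatives for injectivity. The paper packages the injectivity argument slightly differently---assuming two representatives give the same $P_i$-coset and deducing first $z_1=z_2$ and then equality of the $(d_1,\ldots,d_r)$---but the underlying content is identical to your within-$z$/across-$z$ split.
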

\begin{proof}
By \eqref{fiberreps}, the set $p_j^{-1}(gP_j)$ is a disjoint union of 
the sets $gU_y$ for $y\in W^j$.  By \eqref{Uydefn}, 
an element of $gU_yB$ is of the form $gx_{k_1}(d_1)n_{k_1}^{-1}\cdots x_{k_r}(d_r)n_{k_r}^{-1}
x_{\ell_1}(e_1)n_{\ell_1}^{-1}\cdots x_{\ell_t}(e_t)n_{\ell_t}^{-1}
B$ and then
\begin{align*}
p_i(gx_{k_1}&(d_1)n_{k_1}^{-1}\cdots x_{k_r}(d_r)n_{k_r}^{-1}
x_{\ell_1}(e_1)n_{\ell_1}^{-1}\cdots x_{\ell_t}(e_t)n_{\ell_t}^{-1}
B) \\
&= gx_{k_1}(d_1)n_{k_1}^{-1}\cdots x_{k_r}(d_r)n_{k_r}^{-1}
x_{\ell_1}(e_1)n_{\ell_1}^{-1}\cdots x_{\ell_t}(e_t)n_{\ell_t}^{-1}
P_i  \\
&= gx_{k_1}(d_1)n_{k_1}^{-1}\cdots x_{k_r}(d_r)n_{k_r}^{-1}
P_i .
\end{align*}
Thus each element of $p_i(p_j^{-1}(gP_j))$ can be written in the form 
$gx_{k_1}(d_1)n_{k_1}^{-1}\cdots x_{k_r}(d_r)n_{k_r}^{-1}P_i$.  

Now let $z_1, z_2\in (W_j)^{\{i,j\}}$ with chosen reduced decompositions
$$z_1 = s_{k_1}\cdots s_{k_r}
\qquad\hbox{and}\qquad
z_2 = s_{k_1'}\cdots s_{k_m'}.
$$
Assume
$$
gx_{k_1}(d_1)n_{k_1}^{-1}\cdots x_{k_r}(d_r)n_{k_r}^{-1} P_i
=gx_{k_1'}(d_1')n_{k_1'}^{-1}\cdots x_{k_m'}(d_m')n_{k_m'}^{-1}P_i.
$$
Then
$$
x_{k_1}(d_1)n_{k_1}^{-1}\cdots x_{k_r}(d_r)n_{k_r}^{-1} P_i
=x_{k_1'}(d'_1)n_{k_1'}^{-1}\cdots x_{k_m'}(d_m')n_{k_m'}^{-1}P_i.
$$

Since $z_1\in (W_j)^{\{i,j\}}$ and $R^+_{\{i,j\}}\subseteq R_i^+$,
we have $R(z)\cap R_i^+ \subseteq R(z) \cap R_{\{i,j\}}^+ = \varnothing$, giving that $z_1\in W^i$. 
Similarly $z_2\in W^i$.
Since
$x_{k_1}(d_1)n_{k_1}^{-1}\cdots x_{k_r}(d_r)n_{k_r}^{-1} P_i
=x_{k_1'}(d'_1)n_{k_1'}^{-1}\cdots x_{k_m'}(d'_m)n_{k_m'}^{-1}P_i$,
we have $z_1W_i = z_2W_i$. Since $z_1$ and $z_2$ are minimal length coset representatives
of the same coset in $W/W_i$, and since such coset representatives are unique (see \cite[Ch.\ 4 \S1 Exercise 3]{Bou}), we find that 
\[
z_1=z_2.\]
Since the reduced decompositions of 
elements of $(W_j)^{\{i,j\}}$ were fixed,
\[
(k_1, \ldots, k_r) = (k_1', \ldots, k_m').
\]
By Proposition \ref{GPireps}, since $x_{k_1}(d_1)n_{k_1}^{-1}\cdots x_{k_r}(d_r)n_{k_r}^{-1} P_i
=x_{k_1}(d'_1)n_{k_1}^{-1}\cdots x_{k_r}(d'_r)n_{k_r}^{-1}P_i$, we have
\[
(d_1, \ldots, d_r) = (d'_1, \ldots, d'_r).
\]
Thus each element of $p_i(p_j^{-1}(gP_j))$ has a \emph{unique} expression as
$gx_{k_1}(d_1)n_{k_1}^{-1}\cdots x_{k_r}(d_r)n_{j_k}^{-1}P_i$.  
\end{proof}

\begin{proof}[Proof of Theorem \ref{maintheorem}]  Let $w\in W$ and let $gP_j$ be in the image of $p^w_j\colon BwB\to G/P_j$.
The decomposition $w=uy=uzv$ is unique (see \cite[Ch.\ 4 \S1, Exercise 3]{Bou}).
Thus $z$ is determined. Hence by Proposition \ref{pullpushdecomp}, the set
\begin{align*}
p_i^w(p_j^w)^{-1}(gP_j) = p_i^w(X_w) \cap p_ip_j^{-1}(gP_j)
\end{align*}
has $q^{\ell(z)}$ elements.
\end{proof}

\begin{exmp}
Take $G=G(\FF)= \GL_4(\FF)$ and the notation given in Section \ref{subsecflagvarieties}.
Let $i=1$ and $j=2$.  Then
$$
W=S_4,\quad W_1 = S_1\times S_3, \quad W_2 = S_2\times S_2,
\quad
W_{1,2} = S_1\times S_1\times S_2,$$
and 
\[
W^1 = \{ 1, s_1, s_2s_1, s_3s_2s_1\},\quad
W^2=\{ 1, s_2, s_1s_2, s_3s_2, s_1s_3s_2, s_2s_1s_3s_2\}
\quad\text{and}\quad
(W_2)^{1,2} = \{1, s_1\}.
\]
Let  $w=uzy = (s_1s_3s_2)(s_1)(s_3)$.
Consider the incidence structure $(X_w)_{12}$ and 
\[
g = x_1(c_1)n_1^{-1}x_3(c_2)n_3^{-1}x_2(c_3)n_2^{-1}.
\]
Then
\begin{align*}
p_1(p_2^{-1}(gP_2))&=p_1(p_2^{-1}(x_1(c_1)n_1^{-1}x_3(c_2)n_3^{-1}x_2(c_3)n_2^{-1}P_2)) \\
&= \left\{ x_1(c_1)n_1^{-1}x_3(c_2)n_3^{-1}x_2(c_3)n_2^{-1}
x_1(d_1)n_1^{-1}P_1\mid d_1\in \FF\right\}.
\end{align*}
This illustrates that $p_1(p_2^{-1}(gP_2))\cong \FF$ even though the elements of $p_1(p_2^{-1}(gP_2))$
as displayed
are not the ``favourite'' coset representatives of the cosets
in $G/P_1$ given by Proposition \ref{GPireps}.  This provides a conceptual explanation of why 
Proposition \ref{pullpushdecomp} (and Theorem \ref{maintheorem}) are nontrivial. One needs to find the 
right coordinatization to succeed in displaying $p_1(p_2^{-1}(gP_2))$ naturally as an affine space.
\end{exmp}

Recall from the introduction that the first of the defining conditions for an ovoid $\cO$ in $\cP(V)$ is
`thinness' (O1): any $\ell$ of $\cP(V)$ contains at most two points of $\cO$.
Using Theorem \ref{maintheorem} to determine the Schubert incidence structures that are `thin' produces
the following result.

\begin{cor} \label{cor:thinschucells} Let $G(\FF_q)$ be a Chevalley group over a finite field $\FF_q$. Then the Schubert incidence structures
$(X_w)_{ij}$ such that there are at most two points incident with each line correspond to triples $(w,i,j)$ such that
\[
\begin{cases}
w\in W^jW_{i,j}, &\hbox{if $q>2$,} \\
w\in W^jW_{i,j}\cup W^js_iW_{i,j}, &\hbox{if $q=2$.}
\end{cases}
\]
\end{cor}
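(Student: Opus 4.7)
The plan is to reduce the statement directly to Theorem \ref{maintheorem} by a case split on the field size. By the Main Theorem, any line $gP_j$ in $(X_w)_{ij}$ has exactly $q^{\ell(z)}$ incident points, where $z \in (W_j)^{i,j}$ is the unique middle factor in the decomposition $w = uzv$ with $u \in W^j$ and $v \in W_{i,j}$. Hence the condition ``at most two points on every line'' is equivalent to the numerical inequality $q^{\ell(z)} \le 2$ on this distinguished $z$, and the remainder of the argument is a finite case analysis of when this inequality holds.

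For $q \ge 3$ the inequality $q^{\ell(z)} \le 2$ forces $\ell(z) = 0$, so $z = 1$, and the decomposition collapses to $w = uv$ with $u\in W^j$, $v\in W_{i,j}$. This is precisely the condition $w \in W^j W_{i,j}$, yielding the first branch.

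For $q = 2$ the inequality permits $\ell(z) \in \{0,1\}$. The length-zero case again gives $w \in W^j W_{i,j}$. For the length-one case one must identify which simple reflections belong to $(W_j)^{i,j}$. Since $W_j$ is generated by $\{s_k : k \ne j\}$ and $W_{i,j} = W_i\cap W_j$ by $\{s_k : k \notin \{i,j\}\}$, the only simple reflection lying in $W_j$ but not in $W_{i,j}$ is $s_i$. A brief check confirms that $s_i$ is the minimal length representative of its coset in $W_j / W_{i,j}$ (its coset does not contain the identity, since $s_i \notin W_{i,j}$), so $(W_j)^{i,j}$ contains exactly one element of length one, namely $s_i$. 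This contributes precisely $w \in W^j s_i W_{i,j}$, and combining with the length-zero case yields the second branch.

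The substantive content is entirely in Theorem \ref{maintheorem}; once it is in hand, the corollary reduces to elementary bookkeeping. The main, and mild, obstacle is the Weyl-group verification that $\{s_i\}$ is the complete list of length-one elements of $(W_j)^{i,j}$; this requires simultaneously tracking which simple reflections generate $W_j$ and which generate $W_{i,j}$, and invoking the uniqueness of minimal length coset representatives (\cite[Ch.\ 4 \S1 Exercise 3]{Bou}) to rule out any other candidate.
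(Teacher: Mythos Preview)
Your proposal is correct and follows essentially the same approach as the paper: both reduce immediately to Theorem~\ref{maintheorem}, observe that $q^{\ell(z)}\le 2$ forces $\ell(z)=0$ when $q>2$ and $\ell(z)\le 1$ when $q=2$, and then identify $z=1$ and $z=s_i$ as the corresponding elements of $(W_j)^{i,j}$. Your version is in fact slightly more explicit than the paper's, since you spell out why $s_i$ is the unique length-one element of $(W_j)^{i,j}$.
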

\begin{proof}
Assume $w=uzy$ with $u\in W^j$, $z\in (W_j)^{i,j}$, $y\in W_{i,j}$.  Then
$\ell(z) = 0$ only when $z=1$ and $\ell(z)=1$, and this occurs only when $z = s_i$.
\end{proof}





\end{document}